\newtheorem{thm}{Theorem}[section]
\newtheorem{lem}[thm]{Lemma}
\theoremstyle{definition}
\newtheorem{df}[thm]{Definition}
\newtheorem{ex}[thm]{Example}
\theoremstyle{remark}
\numberwithin{equation}{section}
\newcommand{\bnd}{\mathbf{bnd}}
\newcommand{\p}{\mathbf{P}}
\newcommand{\h}{\sum_g\mu(g)\log{\mu(g)}}
\newcommand{\x}{\boldsymbol{x}}
\newcommand{\e}{\boldsymbol{E}}
\newcommand{\g}{\mathcal{G}}
\begin{document}

\title{Asymptotic entropy of transformed random walks}

\author{Behrang Forghani}
\address{Department of Mathematics, University of Ottawa, Canada}
\email{bforg048@uottawa.ca}
\thanks{This work is supported by NSERC (natural sciences and engineering research council) and  CRC (Canada research chairs program)}






\begin{abstract}
We consider general transformations of  random walks on groups determined by  Markov stopping times and prove that the asymptotic entropy (resp., rate of escape) of the
 transformed random walks is equal to the asymptotic entropy (resp., rate of escape) of the original  random walk multiplied by the expectation of the corresponding  stopping time. This is an analogue of the well-known Abramov's formula from ergodic theory, its  particular cases were established
 earlier by Kaimanovich [1983] and Hartman, Lima, Tamuz [2014].
\end{abstract}

\maketitle

\section*{Introduction}
The notion of entropy of a countable probability space was introduced by Shannon in 1948.
He used it to define the asymptotic entropy (entropy rate) in order to  quantify the amount of information for a stationary stochastic process \cite{Sh48}.
Later in the mid 1950's  Kolomogorov  developed the notion of entropy of a measure preserving dynamical system \cite{Kol58}, and his work was completed  by Sinai \cite{Sin59}.
But, it was only  in 1972, that Avez defined the asymptotic entropy of a random walk on a group \cite{A72}.
Despite a formal similarity, the contexts of these definitions are different, and so far
there is no common approach which would unify them.

\medskip
The asymptotic entropy is an important quantity which describes the behavior of a random walk at infinity.
For instance, the triviality of the Poisson boundary of a random walk is equivalent to vanishing of the asymptotic entropy \cite{ A74,De80,KV83}.

\medskip

There are various formulas for the asymptotic entropy of a random walk on a group:
\begin{enumerate}
\item [\upshape(i)] In terms of the entropy of convolution powers \cite{KV83}, see equality (\ref{def asy}) below,
\item [\upshape(ii)] Shannon's formula \cite{De80,KV83}, see Theorem~\ref{shannon} below,
\item [\upshape(iii)]\label{iii} As the average of  {\it Kullback--Liebler deviations} between the harmonic measure and its translates  \cite{K83,KV83}, see equality (\ref{mubnd}) below,
\item [\upshape(iv)] As the exponential growth rate of the Radon--Nikodym derivatives of  the translates of the harmonic measure along sample paths of the random walk \cite{KV83}, see equality  (\ref{harmonic}) below.
\end{enumerate}

In the last two formulas, the asymptotic entropy is expressed in terms of the Poisson boundary of the random walk,
which suggests considering a possible relationship between asymptotic entropies for random walks on the same group which share
a common Poisson boundary.

Earlier this relationship was studied in two particular situations:
\begin{enumerate}
\item [(j)] convex combinations of  convolutions of a given probability measure  \cite{K83},
\item [(jj)] the induced random walk  on a recurrent subgroup \cite{Fu70, K91, K90, Y13}.
\end{enumerate}

In case (j), the asymptotic entropy can be obtained by a direct calculation based  on formula \upshape(iii) \cite{K83}.

 In case (jj), Furstenberg \cite{Fu70} introduced induced random walk on a recurrent subgroup and proved  that its Poisson boundary is the same as the original Poisson boundary.
Kaimanovich \cite{K90} used a similar model to study harmonic functions on a Riemannian manifold and  to compare the asymptotic entropies in this context. Although his setup was somewhat different, by the same approach one can also find the asymptotic entropy of the induced random walk  on a recurrent subgroup
\cite{K91}. Recently, Hartman, Lima and Tamuz  \cite{Y13} calculated the asymptotic entropy of the random walk induced on a finite index subgroup  in an alternative way by using formula \upshape(iii) (although, apparently,  they were not aware of \cite{K90} and \cite{K91}).

\medskip

The probability measures arising in the above situations are examples of transformations of probability measures which do not change the Poisson boundary. Finding all probability measures with the same Poisson boundary is important for understanding the structure of a group $G$. For instance, Furstenberg \cite{Fu70} proved that the Poisson boundary of $SL(n,\Bbb R)$ is the same as the Poisson boundary of a lattice of  $SL(n,\Bbb R)$ endowed with an appropriate measure. By using this fact, he concluded that the lattices of  $SL(n,\Bbb R)$ for $n\geq3$ are different from discrete subgroups of $SL(2,\Bbb R)$.
\medskip

Despite the existence of several examples, currently there is no general way  to find all probability measures
on a group $G$ which have the same Poisson
boundary as a given probability measure $\mu$.  However,
Kaimanovich and the author \cite{BK2013} proposed  a method  to construct many probability measures whose Poisson boundary coincides with that of $\mu$.
This method consists in applying  a Markov stopping time to the original random walk (or, to its randomization) and all currently known examples can be obtained by this method.
\medskip

The purpose of this article is to show how these transformations affect the asymptotic entropy.
We will show that the asymptotic entropy $h'$ of the transformed random walk is the result of rescaling the asymptotic entropy $h$ of the original random walk by the expectation $\boldsymbol{\tau}$ of the stopping time  (Theorem~\ref{markov extension}):
\begin{equation}\label{abra}
h'=\boldsymbol{\tau} h.
\end{equation}
The aforementioned examples (j) and (jj) are contained in this result as  particular cases.

\medskip

Equation~(\ref{abra}), the rescaling  of the asymptotic entropy under a  ``time change'', is analogous to Abramov's formula
 \cite{Ab59} for the entropy  of induced dynamical systems (Theorem~\ref{abramov}). However, as we have already pointed out,
 we are not aware of any common context which would unify these two formulas.

Our proof consists of three steps. Firstly, by using the martingale theory, we  prove that  finiteness of the entropy   of a probability measure is preserved after applying a  Markov stopping time with a finite expectation (Lemma~\ref{finite}).  Secondly, by taking into account
formula \upshape(iv) for the asymptotic entropy, we will establish the main result. And finally, applying the same method, we will prove that this result holds for randomized Markov stopping times as well (Theorem~\ref{markov extension}).

We would like to emphasize that in  our general setup finiteness of the expectation   $\boldsymbol\tau$ is not related to finiteness of any associated space (which can already be observed in the  case of convolution powers, see above (j) ). On the other hand, the technique used by Hartman, Lima and Tamuz \cite{Y13} crucially depends
on the fact that for the induced random walks on  recurrent subgroups  $\boldsymbol\tau<\infty$ if and only if the subgroup has finite index, in combination
with a  number of properties of finite state Markov chains (formulated in the Appendix to \cite{Y13}).

\medskip

The rate of escape  is another quantity which describes behavior of a random walk at infinity.
There are some interrelations between the rate of escape and asymptotic entropy  \cite{V85, KL07}.
We will show that  the rate of escape of a transformed random walk under a randomized Markov stopping time is also transformed according to formula  (\ref{abra}).

\medskip

\subsection*{Acknowledgment}
I am very grateful to my supervisor, Vadim Kaimanovich,
whose support  and patience have enabled me
to develop an understanding of the subject.
\section{Preliminaries}
In this section, we will recall the basic definitions related to random walks on a group, the associated Poisson boundary and  transformations of a random walk via a Markov stopping time.
\subsection{Random walks on groups}
Throughout this paper, we assume that $G$ is a countable group with the identity element $e$
endowed with a probability measure $\mu$.  The random walk $(G,\mu)$ is the Markov chain on $G$ with the
transition probabilities
$$
p(g_1,g_2)=\mu(g^{-1}_1g_2).
$$
The {\it space of increments} of the random walk $(G,\mu)$ is denoted by $(G^{\mathbb N},\bigotimes\limits_{1}^{\infty}\mu)$ ($\bigotimes$ denotes the product measure), equivalenty,
it is the set of sequences $(h_n)_{n\geq 1}$ such that $h_n$'s are  independent and identically $\mu$-distributed.
The space of sample paths of the random walk $(G,\mu)$ is the probability
space $(G^{\Bbb Z+},\p)$,  where $G^{\Bbb Z+}=\{e\}\times G^{\Bbb N}$ (we only consider  random walks issued from the identity),
and the probability measure $\p$  is the image of the measure
$\bigotimes_{1}^{\infty}\mu$ under the map
$$
(h_1,h_2,\cdots)\mapsto \x=(e,x_1,x_2,\cdots),
$$
where $x_n=h_1\cdots h_n$ is the position of the random walk $(G,\mu)$ at time $n$.

\subsection{Poisson boundary}
Let $T$ be the time shift on the space of sample paths, i.e., $T(x_n)=(x_{n+1})$. Let
$\mathcal{A}_T$ be the invariant $\sigma$-algebra of the time shift $T$.
In other words,
$$
\mathcal{A}_T=\{A\ \mbox{measurable } :\ A=T^{-1}A \}.
$$
Rokhlin's correspondence theorem  \cite{Ro52} implies that  there exists
a quotient map $\bnd$ from $(G^{\Bbb Z+},\p)$ onto a Lebesgue space $(\Gamma,\nu)$
such that the pre-image of the $\sigma$-algebra of $(\Gamma,\nu)$ under the map $\bnd$ coincides (mod~0) with $\mathcal{A}_T$. The image of the measure  $\p$ under the map $\bnd$  is called {\it harmonic measure}.
The Lebesgue space $(\Gamma,\nu)$ is called the {\it Poisson boundary} of the random walk $(G,\mu)$.

\medskip

The group $G$ acts naturally on the Poisson boundary,  and $\nu$ is a $\mu$-stationary measure, that is,
$$
\mu*\nu=\sum_g\mu(g)g\nu=\nu.
$$

Let $\mu'$ be another probability measure on group $G$ whose Poisson boundary is $(\Gamma',\nu')$.
We say Poisson boundaries $(\Gamma,\nu)$ and $(\Gamma',\nu')$  are  the same, whenever these boundaries are isomorphic as two
measure $G$-spaces. More precisely,
there exists a bijective measurable map $\phi:(\Gamma,\nu)\to (\Gamma',\nu')$ such that
$\nu'$ is the image of the probability measure $\nu$ and $g\phi(\gamma)=\phi(g\gamma)$
for every $g\in G$, and almost every $\gamma\in\Gamma$.
\subsection{Transformed random walks}

Let $\tau:(G^{\Bbb Z+},\p)\mapsto \Bbb N$ be a Markov stopping time on the space of sample paths, i.e., $\tau$
is a measurable map such that for every natural number $s$ the set $\tau^{-1}\{s\}$
belongs to $\mathcal{A}_0^s$, the $\sigma$-algebra generated by the position of the random walk between time 0 and $s$ of the random walk $(G,\mu)$.
[We only consider  Markov stopping times which are finite almost everywhere!].

\medskip

Random walks on groups are time and space homogenous, which allows us to iterate the Markov stopping time $\tau$ and produce a new random walk \cite{BK2013}. More precisely,  let $\tau_1=\tau$, and define by induction
\begin{equation}\label{iteration}
\tau_{n+1}=\tau_n+\tau(U^{\tau}),
\end{equation}
where
\begin{equation}\label{induced}
U(\x)=(x_1^{-1}x_{n+1})_{n\geq0}
\end{equation}
 is the transformation of the path space induced by the time shift in the space of increments.

\medskip
Then, $(x_{\tau_n})$ is a sample path of the random walk $(G,\mu_{\tau})$, where
$\mu_{\tau}$ is  the distribution of  $x_{\tau}$, i.e.,
$$
\mu_{\tau}(g)=\p\{\x\ :\ x_{\tau}=g\}.
$$
Obviously, each $\tau_n$ is also a Markov stopping time, and, moreover,
the distribution of $x_{\tau_n}$ is the $n$-fold convolution of $\mu_{\tau}$, i.e, $\mu_{\tau_{n}}=(\mu_{\tau})^{*n}$.

\medskip

Straightforward examples of this construction are
\begin{ex}\label{convolution}
Let $k$ be a positive integer, and let  $\tau$ be the constant function $k$. Then $\mu_{\tau}$ is the $k$-fold convolution of measure $\mu$,
i.e., $\mu^{*k}$.
\end{ex}

\begin{ex}
Let $\tau_1$ and $\tau_2$ be two Markov stopping times for the random walk $(G,\mu)$. Then,
$\tau=\tau_1+\tau_2(U^{\tau_1})$ is a Markov stopping time,
and $\mu_{\tau}=\mu_{\tau_1}*\mu_{\tau_2}$.
\end{ex}

\begin{ex}\label{wil}
Let $B$ be a subset of $G$ with $\mu(B)>0$.  For a sample path $\x=(x_n)$, define
$$
\tau(\boldsymbol{x})=\min\{i\geq1\ :\ h_i\in B\},
$$
be the minimal time $i$ such that the increment $h_i$ belongs to $B$.
Then $\tau$ is a  Markov stopping time. If $\mu(B)=1$, then trivially $\mu_{\tau}=\mu$, otherwise
$$
\mu_{\tau}=\beta+\sum_{i=1}^{\infty}\alpha^{*i}*\beta,
$$
where $\beta$ is the restriction of $\mu$ into $B$, i.e.,  $\beta(A)=\mu(A\cap B)$ for a subset $A$ of $G$, and $\alpha=\mu-\beta$.
\end{ex}

\begin{ex}\label{recurrent}
Let $H$ be a  recurrent subset of $G$, i.e.,  $\p\{\x:\ x_n\in H \mbox{\ for\ some } n\}=1$.
Define $$\tau(\boldsymbol{x})=\min\{i\geq1\ x_i\in H\}.$$
Then, $\tau$ is a Markov stopping time called first {\it hitting time} of $H$.
Note that $\tau_2$ is not generally a hitting time of $H$, that is, $x_{\tau_2}$ does not necessarily belong to
the subset $H$. Although, if, in addition, $H$ is  a subgroup of $G$, then $\tau_n$ is precisely the moment when
the random walk returns to $H$ for the $n$-th time. Hence, in this case, $\mu_{\tau}$ is the probability measure induced on the recurrent subgroup $H$.
Consequently, $(G,\mu_{\tau})$ is the induced random walk on the recurrent subgroup $H$.
\end{ex}
\medskip

By \cite{BK2013}, the Poisson boundary of $(G,\mu_{\tau})$ is the same as the Poisson boundary of $(G,\mu)$.
However, Markov stopping times cannot produce all random walks with the same Poisson boundary as the following trivial
example shows:
\begin{ex}\label{trivial}
 Let $G=\Bbb Z_2=\{0,1\}$, then the Poisson boundary of $(\Bbb Z_2,\mu)$ is trivial for any probability measure $\mu$.
 Consider $\mu=\delta_1$, then $(0,1,0,1,0,\cdots)$ is the only sample path. Hence, in this situation, any Markov stopping time must be
 a constant function. Consequently,  the only probability measures that can be obtained by the above procedure for the random walk $(\Bbb Z_2,\delta_1)$ are convolution powers of $\delta_1$, which are $\delta_0$ and $\delta_1$.
 Therefore, the above procedure  cannot produce the probability measure $\dfrac{1}{2}(\delta_0+\delta_1)$.
\end{ex}
In the next section, we will extend our random walk to a larger space and apply Markov stopping times  to the new space to obtain
 even more random walks with the same Poisson boundary.
\subsection{Randomized Markov stopping times}
Let $(\Omega,\theta)$ be a Lebesgue space. Define transition probabilities
$\pi=\{\pi_{(g,\omega)}\}$ on the probability space $(G\times \Omega,\mu\otimes\theta)$ as follows
$$
\pi_{(g,\omega)}=\sum_h\mu(h)\delta_{gh}\otimes\theta.
$$

Denote by $\p_{\delta_e\otimes m}$ the  probability measure on the product space of the extended chain with
the initial distribution $\delta_e\otimes m$.


In this construction, we replace a sample path $\x=(x_n)$ with its ``randomization''
$$(\x,\boldsymbol\omega)=((e,\omega_0),(x_1,\omega_1),\cdots),$$
where $\omega_i$'s are independent and identically $\theta$-distributed random variables.

In particular, the distribution of this Markov chain at time $n$ is $\pi^n_{(g,\omega)}=\sum\limits_h\mu^{*n}(h)\delta_{gh}\otimes\theta.$

\medskip

Now, let $\tau:G^{\Bbb N}\times \Omega^{\Bbb N}\to \Bbb N$ be a Markov stopping time for the extended Markov chain.
Then, we shall call $\tau$  a {\it randomized Markov stopping time} of the random walk $(G,\mu)$.
Note that, if $m$ is concentrated on one point, then any randomized Markov stopping time is just a usual Markov stopping time.

The transformation $U$ (see equation \ref{induced}) in the path space of the original random walk naturally extends to the following transformation of
the path space of the randomized random walk:
$$
\mathcal{U}(\x,\boldsymbol{\omega})=(x_1^{-1}x_{n+1},\omega_{n+1})_{n\geq0},
$$
and then construct $\tau_n$ on the extended Markov chain as in the previous section.
By using $\mathcal{U}$ one can define the iterated stopping times $\tau_n$  for the randomized random walk in the same way as in formula (\ref{iteration}) above.

By projecting  $(x_{\tau_n},\omega_{\tau_n})_{n\geq0}$ onto the first component,  $(x_{\tau_n})_{n\geq0}$,
we obtain a new random walk $(G,\mu_{\tau})$ with
$$
\mu_{\tau}(g)=\p_{\delta_e\otimes m}\{(\x,\boldsymbol{\omega})\ :\ \x_{\tau}=g\}.
$$

\begin{thm}\cite{BK2013}\label{BK}
Let $\tau$ be a randomized Markov stopping time on the random walk $(G,\mu)$, then $\Gamma(G,\mu)=\Gamma(G,\mu_{\tau})$.
\end{thm}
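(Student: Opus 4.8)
The plan is to produce the required $G$-equivariant isomorphism of $(\Gamma,\nu)$ with the Poisson boundary $(\Gamma',\nu')$ of $(G,\mu_\tau)$ on the level of path spaces, by comparing invariant $\sigma$-algebras through the sampling map. Let $\Phi$ send a randomized path $(\x,\boldsymbol\omega)$ to the transformed path $(x_{\tau_n})_{n\ge0}$; then $\Phi$ pushes $\p_{\delta_e\otimes m}$ forward to the path measure of $(G,\mu_\tau)$ and carries the shift $\mathcal U^{\tau}$ to the time shift $T_\tau$ of the transformed walk. By Rokhlin's correspondence it suffices to prove that, modulo~$0$, the pullback $\Phi^{-1}\mathcal A_{T_\tau}$ coincides with the invariant $\sigma$-algebra $\mathcal A_T$ (lifted to the randomized space, where the coordinates $\omega_i$ should contribute nothing because they are i.i.d.\ and independent of the walk). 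I will use throughout that $\mathcal A_T$ is generated, modulo~$0$, by the boundary values $\lim_n f(x_n)$ of bounded $\mu$-harmonic functions $f$, and similarly for $\mathcal A_{T_\tau}$ and $\mu_\tau$ \cite{KV83}.

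First I would treat the inclusion $\mathcal A_T\subseteq\Phi^{-1}\mathcal A_{T_\tau}$. Fix $f\in H^\infty(G,\mu)$. Since $\tau$ is a.s.\ finite and $f(x_n)$ is a bounded martingale, the optional stopping theorem gives $f(g)=\sum_h\mu_\tau(h)f(gh)$, so $f$ is also $\mu_\tau$-harmonic. Because $(x_{\tau_n})$ is a subsequence of the $\p$-a.s.\ convergent sequence $f(x_n)$, we get $\lim_n f(x_{\tau_n})=\lim_n f(x_n)$. The left-hand side is manifestly invariant under $T_\tau$ and hence $\mathcal A_{T_\tau}$-measurable; pulling back through $\Phi$ shows the generator $\lim_n f(x_n)$ is $\Phi^{-1}\mathcal A_{T_\tau}$-measurable. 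Thus $(\Gamma,\nu)$ is a $G$-equivariant quotient of $(\Gamma',\nu')$.

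The reverse inclusion $\Phi^{-1}\mathcal A_{T_\tau}\subseteq\mathcal A_T$ is where the real content lies, and it is symmetric in form but not in difficulty. Its generators are the boundary values $\lim_n g(x_{\tau_n})$ of bounded $\mu_\tau$-harmonic functions $g$. To place each of these in $\mathcal A_T$ it is enough to show that every $g\in H^\infty(G,\mu_\tau)$ is in fact $\mu$-harmonic: then $g(x_n)$ converges $\p$-a.s., its limit is $\mathcal A_T$-measurable, and it agrees with $\lim_n g(x_{\tau_n})$ by the subsequence argument again. So the whole theorem reduces to the inclusion $H^\infty(G,\mu_\tau)\subseteq H^\infty(G,\mu)$, equivalently to the statement that the time change creates no new bounded harmonic functions.

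I expect this reduction to be the main obstacle. The natural route is to exhibit $(\Gamma',\nu')$ as a $\mu$-boundary, i.e.\ to show that along $\p$-a.e.\ path the full sequence $x_n$ --- not merely the subsequence $x_{\tau_n}$ --- converges to a point of $\Gamma'$ with $\mu$-stationary hitting measure; the maximality of the Poisson boundary among $\mu$-boundaries \cite{Fu70,KV83} would then give the missing quotient map $\Gamma\to\Gamma'$ and, combined with the previous paragraph, the desired isomorphism. The delicate points are (a) controlling the behaviour of $x_n$ between consecutive stopping times, where the a.s.\ finiteness of $\tau$ and the resulting $\tau_k\to\infty$ are exactly what force $x_n$ and $x_{\tau_n}$ to the same boundary point, and (b) verifying that the auxiliary randomness $(\omega_i)$, being independent of the walk, enlarges neither $\mathcal A_T$ nor the class of harmonic functions. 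Once $\Phi^{-1}\mathcal A_{T_\tau}=\mathcal A_T$ is established, Rokhlin's correspondence upgrades $\Phi$ to the measurable $G$-isomorphism $\phi:(\Gamma,\nu)\to(\Gamma',\nu')$, proving $\Gamma(G,\mu)=\Gamma(G,\mu_\tau)$.
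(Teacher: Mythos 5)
The paper itself does not prove Theorem~\ref{BK}: it is imported from \cite{BK2013} (listed as in preparation), so there is no internal proof to measure you against. Judged on its own terms, your proposal establishes only the easy half. Your second paragraph --- optional stopping shows every bounded $\mu$-harmonic $f$ is $\mu_\tau$-harmonic, and $\lim_n f(x_{\tau_n})=\lim_n f(x_n)$ because the transformed path is a subsequence of a $\p$-a.s.\ convergent martingale --- is a sound argument that $(\Gamma,\nu)$ is a $G$-equivariant quotient of $(\Gamma',\nu')$. But the converse quotient map $\Gamma\to\Gamma'$, which is the actual content of the theorem (the time change creates no \emph{new} boundary behaviour), is never established: you explicitly defer it as ``the main obstacle'' and offer only a one-sentence sketch (``exhibit $(\Gamma',\nu')$ as a $\mu$-boundary with $\mu$-stationary hitting measure, then invoke maximality''). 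Controlling $\Phi^{-1}\mathcal{A}_{T_\tau}$ against $\mathcal{A}_T$, including the contribution of the auxiliary randomness $(\omega_i)$, is precisely where a real idea is required, and the proposal contains no argument for it. A proof that consists of the trivial direction plus a statement of the hard direction is not a proof.

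There is also a concrete error in the intermediate reduction: the claim that ``the whole theorem reduces to the inclusion $H^\infty(G,\mu_\tau)\subseteq H^\infty(G,\mu)$'' is false as stated. Take $G=\mathbb{Z}$, $\mu=\delta_1$ and the constant stopping time $\tau=2$, so that $\mu_\tau=\delta_2$: every bounded $2$-periodic function is $\mu_\tau$-harmonic but not $\mu$-harmonic, yet both random walks issued from $e$ are deterministic and both Poisson boundaries (in the paper's convention) are trivial, so the theorem holds while your proposed inclusion fails. The obstruction is that the support of $\mu_\tau$ may generate a proper subgroup, so bounded $\mu_\tau$-harmonic functions on all of $G$ can be strictly more numerous without the boundary changing; the correct statement concerns the invariant $\sigma$-algebras on the path space (equivalently, harmonic functions restricted to the semigroup actually visited by the walk), not a literal inclusion of the function spaces $H^\infty$. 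Your final paragraph, which pivots to the $\mu$-boundary/maximality route, avoids this pitfall, but since that route is left entirely unexecuted, the gap remains.
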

This randomization allows one to construct  even more random walks such that their Poisson boundaries are the same
as for the original random walk. For example, now we can obtain convex combinations of convolution powers (cf. Example~\ref{trivial}).

\begin{ex}\label{convex}\cite{BK2013}
Let $\Omega=\Bbb N$ with probability measure $\theta$.
Define a Markov stopping time $\tau$ as
$$
\tau((h_1,\omega_1),(h_2,\omega_2),\cdots)=\omega_1.
$$
Then
$
\mu_{\tau}=\sum\limits_{n\geq0}\theta(n)\mu^{*n}.
$
\end{ex}

\begin{ex}\label{wil2}\cite{BK2013}
Let  $\alpha$ and $\beta$ be two sub-probability measures with  $\mu=\alpha+\beta$. Then there is a
randomized Markov stopping time $\tau$ such that
$$
\mu_{\tau}=\beta+\sum_{i\geq1}\alpha^{*i}*\beta.
$$
\end{ex}
This transformation, $\mu=\alpha+\beta\mapsto\mu'=\beta+\sum\limits_{i\geq1}\alpha^{*i}*\beta$,  was introduced by Willis \cite{W90}. He also proved that $\Gamma(G,\mu)=\Gamma(G,\mu')$ .
If, in addition, $\alpha$ and $\beta$ are  mutually singular, then  this example reduces to Example~\ref{wil}.
\section{Entropy of transformed measures}  The aim of this  section is to study the relation between the asymptotic entropies of
 the transformed random walks defined as above and of the original random walk .
\subsection{Asymptotic entropy of a random walk}
The quantity
$$
H(\mu)=-\h
$$
is called the {\it entropy} of $\mu$.
The sequence $\{H_n\}$ is sub-additive, i.e., $H_{n+m}\leq H_n+H_m,$
where $H_n=H(\mu^{*n})$.
Therefore the limit of $H_n/n$  exists. The {\it asymptotic entropy} of $(G,\mu)$ is defined  as
\begin{equation}\label{def asy}
h(\mu)=\lim\limits_n\dfrac{H_n}{n}.
\end{equation}
{\bf Throughout the rest of the paper,  we always assume that $H(\mu)=H_1$ is finite},
in which case $h({\mu})\leq H_1<\infty$.

\medskip

The asymptotic entropy can  also be obtained by Shannon's formula:
\begin{thm}\cite{KV83, De80}\label{shannon}
For $\p$-almost every sample path $(x_n)$,
$$
-h({\mu})=\lim\dfrac{1}{n}\log{\mu^{*n}(x_n)}.
$$
Moreover, the convergence holds in $L^1(\p)$.
\end{thm}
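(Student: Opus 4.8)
The plan is to exhibit the sequence $-\log\mu^{*n}(x_n)$ as a subadditive cocycle over the measure-preserving transformation $U$ of the path space (see (\ref{induced})) and then to invoke Kingman's subadditive ergodic theorem. Since $U$ is the image on $(G^{\Bbb Z+},\p)$ of the Bernoulli shift acting on the i.i.d.\ increments, it is measure preserving and ergodic. I would set
\[
g_n(\x)=-\log\mu^{*n}(x_n).
\]
Because $\mu^{*n}(x_n)\le1$ one has $g_n\ge0$, and $\e[g_n]=-\sum_g\mu^{*n}(g)\log\mu^{*n}(g)=H_n\le nH_1<\infty$, so every $g_n$ lies in $L^1(\p)$.

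Next I would establish the subadditivity relation. Starting from the convolution identity
\[
\mu^{*(n+m)}(x_{n+m})=\sum_{k\in G}\mu^{*n}(k)\,\mu^{*m}(k^{-1}x_{n+m})\ge\mu^{*n}(x_n)\,\mu^{*m}(x_n^{-1}x_{n+m}),
\]
where the inequality keeps only the (nonnegative) term $k=x_n$, and observing that the position at time $m$ of $U^n\x$ is exactly $x_n^{-1}x_{n+m}$, so that $g_m\circ U^n(\x)=-\log\mu^{*m}(x_n^{-1}x_{n+m})$, I take $-\log$ of the display to obtain
\[
g_{n+m}\le g_n+g_m\circ U^n .
\]
This is precisely the hypothesis of the subadditive ergodic theorem.

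Finally, since $\e[g_n]=H_n\ge0$, the quantity $\inf_n\e[g_n]/n=\lim_n H_n/n=h(\mu)$ is finite, and Kingman's theorem gives that $g_n/n$ converges $\p$-almost everywhere and in $L^1(\p)$ to a $U$-invariant function of expectation $h(\mu)$; ergodicity of $U$ forces this limit to be the constant $h(\mu)$. Unwinding the definition of $g_n$ then yields
\[
\lim_n\frac1n\log\mu^{*n}(x_n)=-h(\mu)
\]
both almost surely and in $L^1(\p)$, as claimed.

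The convolution estimate and the nonnegativity bookkeeping will be routine; the step that will require the most care is verifying the hypotheses of Kingman's theorem, namely the $L^1$-integrability of every $g_n$ (resting on the standing assumption $H_1<\infty$ together with the subadditivity $H_n\le nH_1$) and the identification $g_m\circ U^n(\x)=-\log\mu^{*m}(x_n^{-1}x_{n+m})$, which is what converts the convolution bound into a bona fide subadditivity relation for the cocycle $(g_n)$.
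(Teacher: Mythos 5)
Your proof is correct and is essentially the argument of the cited sources: the paper itself states this theorem without proof, quoting \cite{De80} and \cite{KV83}, and Derriennic's proof is exactly your reduction to Kingman's subadditive ergodic theorem via the cocycle $g_n(\x)=-\log\mu^{*n}(x_n)$ over the ergodic increment shift $U$, using the same convolution inequality that the paper records as (\ref{equality}). All the hypotheses you check (nonnegativity, $\e[g_n]=H_n\leq nH_1<\infty$, ergodicity of $U$ forcing the limit to be the constant $\inf_n H_n/n=h(\mu)$) are the right ones, so nothing is missing.
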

Shannon's formula leads to the following description of the asymptotic entropy of $(G,\mu)$ as well:
\begin{thm}\cite{KV83}\label{harmonic}
For $\p$-almost every  sample path $\x=(x_n)$,
$$
h({\mu})=\lim_n\dfrac{1}{n}\log \dfrac{dx_n\nu}{d\nu}\bnd(\x).
$$

\end{thm}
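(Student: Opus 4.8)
The plan is to deduce the statement from Shannon's formula (Theorem~\ref{shannon}) via the conditional structure of the boundary. First I would disintegrate the joint law on $G\times\Gamma$ of the pair $(x_n,\bnd(\x))$. By the Markov property, conditioning on the position $x_n=g$ makes the future an independent copy of the walk started from $g$, so the conditional law of $\bnd(\x)$ given $\mathcal{A}_0^n$ is $x_n\nu$; in particular its conditional law given $x_n=g$ is $g\nu$. Writing the joint law as $\sum_g\mu^{*n}(g)\,\delta_g\otimes g\nu$ and reversing the disintegration (using $\mu^{*n}*\nu=\nu$, so the $\Gamma$-marginal is exactly $\nu$) shows that the conditional law of $x_n$ given $\bnd(\x)=\gamma$ is $q^n_\gamma(g)=\mu^{*n}(g)\,\frac{d(g\nu)}{d\nu}(\gamma)$. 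Reading off the Radon--Nikodym derivative at the realized pair $(g,\gamma)=(x_n,\bnd(\x))$ yields the pointwise identity
$$\log\frac{d(x_n\nu)}{d\nu}(\bnd(\x))=\log q^n_{\bnd(\x)}(x_n)-\log\mu^{*n}(x_n).$$
Dividing by $n$ and invoking Shannon's formula, which gives $-\frac1n\log\mu^{*n}(x_n)\to h(\mu)$ almost surely, reduces everything to proving that $\frac1n\log q^n_{\bnd(\x)}(x_n)\to 0$ almost surely.

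Second, to show the left-hand side converges at all and to identify its limit, I would exploit the multiplicative cocycle property $\frac{d(g_1g_2)\nu}{d\nu}(\gamma)=\frac{d(g_1\nu)}{d\nu}(\gamma)\,\frac{d(g_2\nu)}{d\nu}(g_1^{-1}\gamma)$ together with the equivariance $\bnd(U^k\x)=x_k^{-1}\bnd(\x)$ of the boundary map under the shift $U$ of~(\ref{induced}). Since the first increment of $U^k\x$ is $h_{k+1}=x_k^{-1}x_{k+1}$, these combine into the additive decomposition $\log\frac{d(x_n\nu)}{d\nu}(\bnd(\x))=\sum_{k=0}^{n-1}\Phi(U^k\x)$, where $\Phi(\x)=\log\frac{d(x_1\nu)}{d\nu}(\bnd(\x))$. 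The transformation $U$ preserves $\p$ (the increments are i.i.d.) and is ergodic, so Birkhoff's theorem gives, almost surely and in $L^1(\p)$ once one checks $\Phi\in L^1(\p)$ from $H(\mu)<\infty$,
$$\frac1n\log\frac{d(x_n\nu)}{d\nu}(\bnd(\x))\longrightarrow\int\Phi\,d\p=\sum_g\mu(g)\int_\Gamma\log\frac{d(g\nu)}{d\nu}\,d(g\nu),$$
which is exactly formula (iii), the quantity $\sum_g\mu(g)\,D_{\mathrm{KL}}(g\nu\,\|\,\nu)$. Denoting this constant by $c$ and combining with the first step, $\frac1n\log q^n_{\bnd(\x)}(x_n)\to c-h(\mu)$; since $q^n\le 1$ this already forces $c\le h(\mu)$.

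The main obstacle is the reverse inequality $c\ge h(\mu)$, equivalently $\frac1n\log q^n_{\bnd(\x)}(x_n)\to 0$, i.e. $\frac1n H\!\left(x_n\mid\bnd(\x)\right)\to 0$: the conditional entropy of the position given the \emph{full} boundary must grow sublinearly. This is precisely the maximality of the Poisson boundary --- the statement that it carries all of the asymptotic entropy --- and is where the real work lies. I would establish it either by running a Shannon--McMillan--Breiman argument for the conditional (Doob-transformed) chain given $\gamma=\bnd(\x)$, or by proving directly that the per-step mutual information $\frac1n I(x_n;\bnd(\x))$ converges to $h(\mu)$; in both cases the key input is that $\bnd$ generates the invariant $\sigma$-algebra $\mathcal{A}_T$ modulo $0$, so that no entropy escapes the boundary. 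Alternatively, if formula (iii) (i.e. $c=h(\mu)$, equality~(\ref{mubnd})) is taken as already established in \cite{KV83}, then the first two steps complete the proof immediately.
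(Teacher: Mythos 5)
First, a point of reference: the paper does not prove this statement at all --- it is quoted verbatim from Kaimanovich--Vershik \cite{KV83} --- so your attempt can only be judged against the original argument there, not against anything internal to this paper.

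Your first two steps are correct and are essentially the standard \cite{KV83} machinery. The disintegration $q^n_\gamma(g)=\mu^{*n}(g)\,\frac{d(g\nu)}{d\nu}(\gamma)$, the resulting pointwise identity $\log\frac{d(x_n\nu)}{d\nu}(\bnd(\x))=\log q^n_{\bnd(\x)}(x_n)-\log\mu^{*n}(x_n)$, the multiplicative cocycle property combined with the equivariance $\bnd(U^k\x)=x_k^{-1}\bnd(\x)$, and the Birkhoff argument identifying the almost sure limit with $E_\mu(\Gamma,\nu)=\sum_g\mu(g)\,D_{\mathrm{KL}}(g\nu\,\|\,\nu)$ are all sound; the integrability of $\Phi$ does follow from $H(\mu)<\infty$ as you indicate, via $\Phi\le-\log\mu(x_1)$ together with nonnegativity of each Kullback--Leibler term. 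Combined with Shannon's formula this correctly establishes that the limit in the statement exists and equals $E_\mu(\Gamma,\nu)\le h(\mu)$.

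The genuine gap is the reverse inequality $E_\mu(\Gamma,\nu)\ge h(\mu)$, equivalently $\frac1nH(x_n\mid\bnd)\to0$. You diagnose this correctly as the entropy-maximality of the Poisson boundary, but you do not prove it: you name two possible strategies (a conditional Shannon--McMillan--Breiman argument, or convergence of the per-step mutual information) without executing either, and this is where all of the substance of the theorem lives --- everything preceding it is bookkeeping with Radon--Nikodym cocycles. The actual argument in \cite{KV83} runs through the conditional entropies $H(x_1\mid x_n)=H_1+H_{n-1}-H_n\to H_1-h(\mu)$, the Markov property $H(x_1\mid x_n,x_{n+1},\dots)=H(x_1\mid x_n)$, a limit exchange for conditional entropy along the decreasing family of $\sigma$-algebras $\mathcal{A}_n^\infty$, and the identification (mod $0$) of the tail with the invariant $\sigma$-algebra via a $0$--$2$ law; none of these steps is routine. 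Your fallback --- invoking formula (iii) with equality for the full Poisson boundary --- would indeed close the argument, but that equality is exactly the content of the theorem in a different guise (the two are interderivable by your own steps one and two), so as a self-contained proof the proposal is incomplete.
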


\subsection{Main results}
As a motivation, let us consider the asymptotic entropy of
the random walk determined by the $k$-fold convolution of $\mu$, i.e., $(G,\mu^{*k})$. By definition of the asymptotic entropy
\begin{equation}\label{motivation}
h(\mu^{*k})=\lim_n\dfrac{1}{n}H_{kn}=\lim_nk\dfrac{H_{kn}}{kn}=kh({\mu}),
\end{equation}
which can be rewritten as
\begin{equation}\label{expectation}
h(\mu_{\tau})=\e(\tau)h({\mu}),
\end{equation}
where $\tau$ is the constant Markov stopping time $k$ (see Example~\ref{convolution}).

 \medskip

 The aim of the next theorem is to show  that  equality (\ref{expectation}) can be generalized to  all Markov stopping times (and to all randomized Markov stopping times as we shall do later, see Theorem~\ref{markov extension}) with finite expectation. This is analogous to Abramov's theorem for the entropy of induced dynamical systems:
 \begin{thm}\label{abramov}\cite{Ab59}
 Let $(X,\phi,\mu)$ be an ergodic measure preserving dynamical system. If $A$ is a  measurable subset of $X$ with $\mu(A)>0$, then
 $$
 h({\mu_A},\phi_A)=\dfrac{1}{\mu(A)}h({\mu},\phi).
 $$
 \end{thm}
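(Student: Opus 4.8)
The plan is to deduce Abramov's formula from the Shannon--McMillan--Breiman (SMB) theorem, in close parallel with the way the main results of this paper rest on Shannon's formula (Theorem~\ref{shannon}). Write $r_A(x)=\min\{n\ge1:\phi^n x\in A\}$ for the first-return time. Ergodicity of $\phi$ and Poincar\'e recurrence make $r_A$ finite $\mu_A$-a.e., so the induced map $\phi_A(x)=\phi^{r_A(x)}x$ is defined $\mu_A$-a.e.\ and $(A,\phi_A,\mu_A)$ is again ergodic. The relevant constant is supplied by Kac's lemma: the Kakutani tower $\{\phi^j A_n:n\ge1,\ 0\le j<n\}$ over $A_n=\{r_A=n\}$ fills $X$ mod $0$, whence $\int_A r_A\,d\mu=1$ and therefore $\int_A r_A\,d\mu_A=1/\mu(A)<\infty$. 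This finite expected return time is the ergodic-theoretic counterpart of the finiteness of $\boldsymbol\tau$ in the random-walk theorem.

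First I would fix a partition $\xi$ of $X$ with $H_\mu(\xi)<\infty$ and, refining by $\{A,A^c\}$ if necessary, arrange $A\in\sigma(\xi)$, so that $r_A$ becomes $\xi$-measurable. On $A$ I then form the induced partition $\eta$ whose atom through $x$ records the whole excursion $(\xi(x),\xi(\phi x),\dots,\xi(\phi^{r_A(x)-1}x))$ together with the return time $r_A(x)$. Kac's lemma guarantees $H_{\mu_A}(\eta)<\infty$: the return-time partition $\{A_n\}$ has finite entropy because a distribution on $\Bbb N$ of finite mean $\sum_n n\,\mu_A(A_n)=1/\mu(A)$ has finite entropy, while the remaining excursion information is subadditively bounded by $\sum_n \mu_A(A_n)\,n\,H_\mu(\xi)\le H_\mu(\xi)/\mu(A)<\infty$. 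Moreover $\eta$ generates for $\phi_A$ whenever $\xi$ generates for $\phi$, since the $\phi_A$-name of $x$ reconstructs its full $\phi$-name; hence $h(\mu_A,\phi_A)=h(\mu_A,\phi_A,\eta)$.

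The heart of the matter is a comparison of refined atoms. Put $S_k(x)=\sum_{i=0}^{k-1}r_A(\phi_A^i x)$, the time of the $k$-th return, so Birkhoff's theorem on $(A,\phi_A,\mu_A)$ gives $S_k(x)/k\to\int_A r_A\,d\mu_A=1/\mu(A)$ and $S_k(x)\to\infty$ for $\mu_A$-a.e.\ $x$. Because $A\in\sigma(\xi)$, the return pattern of a point is read off its $\xi$-name, and with the notations $\xi_0^{m}=\bigvee_{j=0}^{m}\phi^{-j}\xi$ and $\eta_0^{k-1}=\bigvee_{i=0}^{k-1}\phi_A^{-i}\eta$ one obtains the inclusions of atoms
$$\xi_0^{S_k(x)}(x)\ \subseteq\ \eta_0^{k-1}(x)\ \subseteq\ \xi_0^{S_k(x)-1}(x)$$
in $X$. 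Since $\mu_A=\mu/\mu(A)$ on subsets of $A$, this sandwiches $\mu(A)\,\mu_A(\eta_0^{k-1}(x))$ between $\mu(\xi_0^{S_k(x)}(x))$ and $\mu(\xi_0^{S_k(x)-1}(x))$.

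Finally I would take logarithmic limits and squeeze. SMB for $(A,\phi_A,\mu_A)$ gives $-\tfrac1k\log\mu_A(\eta_0^{k-1}(x))\to h(\mu_A,\phi_A,\eta)$, while SMB for $(X,\phi,\mu)$ gives $-\tfrac1N\log\mu(\xi_0^{N-1}(x))\to h(\mu,\phi,\xi)$ along every $N\to\infty$. Writing each end of the sandwich as $\tfrac{N}{k}\bigl(-\tfrac1N\log\mu(\xi_0^{N-1}(x))\bigr)$ with $N=S_k(x)$ and $N=S_k(x)+1$---both tending to infinity while $N/k\to1/\mu(A)$---shows that both bounds converge to $h(\mu,\phi,\xi)/\mu(A)$, the normalizing terms $-\tfrac1k\log\mu(A)$ being negligible. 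The squeeze then yields $h(\mu_A,\phi_A,\eta)=h(\mu,\phi,\xi)/\mu(A)$, and taking the supremum over finite-entropy $\xi$ with $A\in\sigma(\xi)$ (whose induced partitions are cofinal among finite-entropy partitions of $A$, so that no finite generator is needed) upgrades this to $h(\mu_A,\phi_A)=h(\mu,\phi)/\mu(A)$. I expect the delicate points to be the verification that $\eta$ is a finite-entropy generator---both facts resting on Kac's finite expected return time $1/\mu(A)$---and the measurable bookkeeping behind the atom inclusions; granted these, the concluding limit is a direct substitution $N=S_k(x)$ into SMB.
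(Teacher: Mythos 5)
This statement is Abramov's classical theorem, which the paper only quotes from \cite{Ab59} as a motivating analogue of Theorem~\ref{main}; there is no proof of it in the paper, so the only possible comparison is with the paper's proof of its own main result. Your SMB-based route is a standard and viable way to prove Abramov's formula, and it is in fact the exact ergodic-theoretic mirror of what the paper does: you sandwich the induced-system quantity between two evaluations of a pointwise convergence theorem taken along the return times $S_k(x)$, and convert the time change using $S_k(x)/k\to 1/\mu(A)$ (Birkhoff plus Kac \cite{Kac47}), just as the paper evaluates Theorem~\ref{harmonic} along $x_{\tau_n}$ and uses $\tau_n/n\to\e(\tau)$. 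The atom inclusions $\xi_0^{S_k(x)}(x)\subseteq\eta_0^{k-1}(x)\subseteq\xi_0^{S_k(x)-1}(x)$ are correct given your normalization $A\in\sigma(\xi)$ (one extra coordinate is needed on the left precisely to certify that the $k$-th return happens at time $S_k(x)$), and the squeeze and the cofinality argument at the end are sound.

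There is one genuine gap: the finiteness of $H_{\mu_A}(\eta)$, which you need before you may invoke Shannon--McMillan--Breiman on the induced system. Your bound $H_{\mu_A}(\eta\mid\{A_n\})\le\sum_n\mu_A(A_n)\,n\,H_\mu(\xi)$ is not justified for countable $\xi$: the quantity $H_{\mu_{A_n}}\bigl(\xi_0^{n-1}|_{A_n}\bigr)$ is computed with respect to the conditional measure on the (possibly very small) set $A_n$, and entropy is not monotone under such conditioning --- it can exceed $H_\mu(\xi_0^{n-1})\le nH_\mu(\xi)$ by an arbitrarily large factor, so the claimed summability does not follow. The standard repair is to run the whole argument with \emph{finite} partitions $\xi$ (refined by $\{A,A^c\}$), which still compute both $h(\mu,\phi)$ and, via the extension $\zeta\mapsto\zeta\cup\{A^c\}$, all of $h(\mu_A,\phi_A)$. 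For finite $\xi$ the atoms of $\eta$ inside $A_n$ number at most $|\xi|^n$, so
$H_{\mu_A}(\eta)\le H_{\mu_A}(\{A_n\})+\log|\xi|\sum_n n\,\mu_A(A_n)=H_{\mu_A}(\{A_n\})+\log|\xi|/\mu(A)<\infty$,
with both terms controlled by Kac's lemma exactly as you intended; note also that $\eta$ is still countably infinite, so you do need the countable finite-entropy version of SMB (Chung) on the induced system, but that is available. With this adjustment the proof is complete.
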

 In order to complete the analogy, note that the constant $\dfrac{1}{\mu(A)}$ is equal to the expectation
 of the return time to the set $A$ (Kac formula) \cite{Kac47}.
\medskip

 Since the sequence $\{\mu_{\tau_n}\}$  is not, generally speaking, a subsequence of the sequence of convolution powers of $\mu$, the generalization of the equality (\ref{expectation}) cannot be done by the same trick as in (\ref{motivation}).

\begin{thm}\label{main}
Let $\tau$ be a Markov stopping time with a finite expectation $\e(\tau)$. Then
$H(\mu_{\tau})$ is also finite, and
$$
h({\mu_{\tau}})=\e(\tau)h({\mu}).
$$
\end{thm}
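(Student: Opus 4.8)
The plan is to establish the two assertions in the stated order: first the finiteness of $H(\mu_\tau)$, and then the identity $h(\mu_\tau)=\e(\tau)h(\mu)$, the latter resting on formula (iv), i.e. Theorem~\ref{harmonic}, applied to both walks simultaneously.

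For finiteness I would start from the pointwise lower bound
$$
\mu_\tau(x_\tau)\ \ge\ \prod_{i=1}^{\tau}\mu(h_i)\qquad \p\text{-a.e.}
$$
This holds because, fixing a path with $\tau=s$ and $x_s=g$, the entire cylinder $\{h_1=a_1,\dots,h_s=a_s\}$ with $a_1\cdots a_s=g$ is contained in $\{x_\tau=g\}$: all its paths agree up to time $s$, and $\tau$ is $\mathcal{A}_0^s$-measurable, so $\mu_\tau(g)=\p\{x_\tau=g\}\ge\prod_{i\le s}\mu(a_i)$. Taking $-\log$ and then expectations gives $H(\mu_\tau)=\e[-\log\mu_\tau(x_\tau)]\le\e\big[\sum_{i=1}^{\tau}-\log\mu(h_i)\big]$. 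Since $\{\tau\ge k\}\in\mathcal{A}_0^{k-1}$ is independent of $h_k$, Wald's identity — equivalently, optional stopping applied to the martingale $\sum_{i\le n}(-\log\mu(h_i)-H_1)$ — yields $\e\big[\sum_{i=1}^{\tau}-\log\mu(h_i)\big]=\e(\tau)H_1$, whence $H(\mu_\tau)\le\e(\tau)H_1<\infty$ under the standing assumption $H_1<\infty$.

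With $h(\mu_\tau)$ now well defined, I would invoke that $(G,\mu_\tau)$ and $(G,\mu)$ share the Poisson boundary $(\Gamma,\nu)$ (Theorem~\ref{BK}); since the sample path $(x_{\tau_n})$ of $(G,\mu_\tau)$ is a subsequence of the path $(x_n)$ of $(G,\mu)$, it converges to the same boundary point, so the harmonic measure $\nu$ and the map $\bnd$ may be used for both walks. Writing $a_m=\log\frac{dx_m\nu}{d\nu}\bnd(\x)$, Theorem~\ref{harmonic} gives, $\p$-a.e.,
$$
\frac{a_m}{m}\ \longrightarrow\ h(\mu)\quad(m\to\infty),\qquad h(\mu_\tau)=\lim_n\frac{a_{\tau_n}}{n}.
$$
I would then factor $\frac{a_{\tau_n}}{n}=\frac{a_{\tau_n}}{\tau_n}\cdot\frac{\tau_n}{n}$. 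The iterated stopping increments $\tau_k-\tau_{k-1}=\tau(U^{\tau_{k-1}})$ are i.i.d.\ copies of $\tau$ by the strong Markov property, so the strong law of large numbers gives $\tau_n/n\to\e(\tau)$ a.e.; and since $\tau_n\ge n\to\infty$, the first factor $a_{\tau_n}/\tau_n\to h(\mu)$ as a subsequence of the a.e.-convergent sequence $a_m/m$. Multiplying the two limits yields $h(\mu_\tau)=\e(\tau)h(\mu)$.

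The conceptual input I expect to be essential is the identification of the two Poisson boundaries together with their harmonic measures, which is precisely what lets $\bnd$ and $\nu$ serve both walks at once; this is supplied by Theorem~\ref{BK}. The main technical point to handle carefully is the passage to the random subsequence: it is the \emph{almost everywhere} convergence of $a_m/m$ along the full sequence that guarantees convergence along $m=\tau_n(\x)\to\infty$ (convergence merely in $L^1$ or in probability would not suffice), and it is here that finiteness of $\e(\tau)$ — ensuring $\tau_n/n$ has a finite a.e.\ limit rather than diverging — enters decisively.
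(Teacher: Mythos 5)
Your proposal is correct, and its second half — identifying the two Poisson boundaries via Theorem~\ref{BK}, writing $\frac{1}{n}\log\frac{dx_{\tau_n}\nu}{d\nu}\bnd(\x)=\frac{\tau_n}{n}\cdot\frac{1}{\tau_n}\log\frac{dx_{\tau_n}\nu}{d\nu}\bnd(\x)$, and combining the strong law for $\tau_n/n$ with almost-everywhere convergence in Theorem~\ref{harmonic} along the subsequence $m=\tau_n(\x)$ — is exactly the paper's argument, including the observation that the transformed path is a subsequence of the original one. Where you genuinely diverge is the finiteness step. The paper's Lemma~\ref{finite} works with the $n$-step convolution: it shows $M_n(\x)=nH_1+\log\mu^{*n}(x_n)$ is a submartingale via $\mu^{*(n+1)}(x_nh)\ge\mu^{*n}(x_n)\mu(h)$, applies Doob's optional sampling to $\tau\wedge n$, and then passes to the limit with Fatou's lemma. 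You instead bound $\mu_{\tau}(g)$ from below by the probability of a single increment cylinder, $\mu_{\tau}(x_\tau)\ge\prod_{i=1}^{\tau}\mu(h_i)$ (valid precisely because $\{\tau=s\}\in\mathcal{A}_0^s$ is determined by the first $s$ increments), and conclude by Wald's identity for the nonnegative i.i.d.\ summands $-\log\mu(h_i)$, which for nonnegative terms is just Tonelli plus the independence of $h_i$ from $\{\tau\ge i\}\in\mathcal{A}_0^{i-1}$. Both routes yield the same bound $H(\mu_\tau)\le\e(\tau)H_1$; yours is somewhat more elementary in that it avoids the truncation $\tau\wedge n$, the optional sampling theorem, and the Fatou limit interchange, at the cost of the (easy but necessary) cylinder-containment observation. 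The rest of your reasoning — in particular the insistence on almost-everywhere rather than $L^1$ convergence when passing to the random subsequence, and the i.i.d.\ structure of the increments $\tau_k-\tau_{k-1}$ — matches the paper's.
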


The proof is based on the fact that each sample path of  the random walk transformed by a Markov stopping time is a subsequence of
 the corresponding sample path of the original random walk, and  the description of the asymptotic entropy as the exponential
  growth rate of the Radon--Nikodym derivatives  of the translates of the harmonic measure
   along the sample path (Theorem~\ref{harmonic}). In order to  apply Theorem~\ref{harmonic}, we need  the Poisson boundaries of the random walks $(G,\mu)$ and $(G,\mu_{\tau})$ to be the same
(which follows from Theorem~\ref{BK}), and  the entropy of the random walk $(G,\mu_{\tau})$ to be finite.

\medskip

First, we  show that the entropy of $\mu_{\tau}$ is finite.

\medskip
\begin{lem}\label{finite}
If $\e(\tau)<\infty$, then the entropy of $\mu_{\tau}$ is also finite, and
$$
H(\mu_{\tau})\leq\e(\tau)H_1.
$$

\end{lem}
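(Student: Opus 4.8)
The plan is to bound $H(\mu_\tau)$ by the entropy of the \emph{stopped increment word} and then evaluate the latter by Wald's identity. Write $\boldsymbol{w}=(h_1,\dots,h_\tau)$ for the (random, finite) string of increments consumed up to the stopping time $\tau$; since $G$ is countable, $\boldsymbol{w}$ takes values in the countable set of finite words over $G$, so the discrete-entropy formalism applies. By construction $x_\tau=h_1\cdots h_\tau$ is the group product of the letters of $\boldsymbol{w}$, hence $x_\tau$ is a deterministic function of $\boldsymbol{w}$. Because applying a map never increases the entropy of a discrete random variable, $H(\mu_\tau)=H(x_\tau)\le H(\boldsymbol{w})$, and it suffices to show $H(\boldsymbol{w})=\e(\tau)H_1$.

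Next I would identify the distribution of $\boldsymbol{w}$. The key point is the defining property of a Markov stopping time: for each $s$ the event $\{\tau=s\}$ lies in $\mathcal{A}_0^s$, i.e.\ it is determined by $h_1,\dots,h_s$. Hence whether a finite string $w=(g_1,\dots,g_s)$ is a \emph{stopping word} (one on which $\tau$ equals its length) depends on $w$ alone, and for such a word $\p\{\boldsymbol{w}=w\}=\p\{h_1=g_1,\dots,h_s=g_s\}=\prod_{i=1}^s\mu(g_i)$, every other word having probability zero. Thus the probability of the realized word is $\prod_{i=1}^\tau\mu(h_i)$, and since $H(\boldsymbol{w})$ equals the expected surprisal, $H(\boldsymbol{w})=\e\big[-\log\prod_{i=1}^\tau\mu(h_i)\big]=\e\big[\sum_{i=1}^\tau(-\log\mu(h_i))\big]$.

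Finally I would evaluate this expectation by Wald's identity. Put $X_i=-\log\mu(h_i)$; these are i.i.d.\ and \emph{nonnegative} (as $\mu(h_i)\le1$), with finite mean $\e(X_1)=H_1$ by the standing assumption. Wald's identity — equivalently, the optional stopping theorem applied to the martingale $M_n=\sum_{i=1}^n X_i-nH_1$, whose hypotheses $\e(\tau)<\infty$ and $\e(X_1)<\infty$ are exactly what we assume — gives $\e\big[\sum_{i=1}^\tau X_i\big]=\e(\tau)H_1$, whence $H(\mu_\tau)\le H(\boldsymbol{w})=\e(\tau)H_1<\infty$. The one place demanding care is this passage, where finiteness of $\e(\tau)$ is essential; happily the nonnegativity of the $X_i$ removes any convergence worry, since by Tonelli $\e[\sum_{i=1}^\tau X_i]=\sum_{i\ge1}\e[\mathbf 1_{\{\tau\ge i\}}X_i]$, and because $\{\tau\ge i\}=\{\tau\le i-1\}^c\in\mathcal{A}_0^{i-1}$ is independent of $h_i$ one has $\e[\mathbf 1_{\{\tau\ge i\}}X_i]=\p\{\tau\ge i\}H_1$, so the sum equals $\e(\tau)H_1$ via the identity $\sum_{i\ge1}\p\{\tau\ge i\}=\e(\tau)$. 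I expect this measurability-and-independence bookkeeping, not any analytic subtlety, to be the only genuine obstacle.
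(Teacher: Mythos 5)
Your proof is correct, and it takes a genuinely different route from the paper's. The paper works with the positions $x_n$ rather than with the increments: it checks that $M_n(\x)=nH_1+\log\mu^{*n}(x_n)$ is a submartingale (via the inequality $\mu^{*(n+1)}(x_nh)\ge\mu^{*n}(x_n)\mu(h)$, i.e.\ (\ref{equality})), applies Doob's optional stopping theorem at the bounded times $\tau\wedge n$ to obtain $H(\mu_{\tau\wedge n})\le\e(\tau\wedge n)H_1\le\e(\tau)H_1$, and then passes to the limit $n\to\infty$ with Fatou's lemma. You instead compute the entropy of the stopped increment word exactly, $H(\boldsymbol{w})=\e(\tau)H_1$, using the prefix-free structure of the stopping words together with Wald's identity, and lose only in the single data-processing step $H(x_\tau)\le H(\boldsymbol{w})$. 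The two arguments are close relatives --- the paper's submartingale dominates the Wald martingale $\sum_{i\le n}\bigl(H_1+\log\mu(h_i)\bigr)$, and (\ref{equality}) iterated is precisely the pointwise form of your data-processing inequality --- but your version avoids the Fatou/limit-interchange bookkeeping entirely and has the added virtue of identifying the defect in the bound as $H(\boldsymbol{w}\mid x_\tau)$, the information destroyed by multiplying out the word. One caveat worth recording: the paper's formulation transfers verbatim to randomized Markov stopping times (as is needed for Theorem~\ref{markov extension}), whereas your word $\boldsymbol{w}$ would then take values in the possibly uncountable set of finite strings over $G\times\Omega$, and since conditioning on $\boldsymbol{\omega}$ only yields an estimate in the wrong direction (entropy is concave under mixing of laws), extending your argument to that setting would require an additional idea.
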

\begin{proof}
Let
$$
M_n(\x)=nH_1+\log{\mu^{*n}(x_n)}.
$$
Since $H_1$ is finite, $M_n$'s are integrable.
If $\mathcal{A}_0^n$ is the $\sigma$-algebra generated by the position of the random walk $(G,\mu)$ between time $0$ and $n$, then
$$
\e(M_{n+1}|\mathcal{A}_0^n)(\x)=(n+1)H_1+\sum_h\mu(h)\log{\mu^{*(n+1)}(x_nh)}.
$$
The sequence $\{M_n,\mathcal{A}_0^n\}$ is a sub-martingale, i.e.,
$$
\e(M_{n+1}|\mathcal{A}_0^n)(\x)\geq(n+1)H_1+\sum_h\mu(h)(\log\mu^{*n}(x_n)+\log{\mu(h))}= M_n(\x),
$$
because
\begin{equation}\label{equality}
\mu^{*(n+1)}(x_nh)\geq\mu^{*n}(x_n)\mu(h).
\end{equation}
Let $\tau\wedge n=\min\{\tau,n\}$. Then, Doob's optional theorem (see \cite[p.~300]{Doob53}) implies that
$$
\e(M_{\tau\wedge n})~\geq\e(M_0)=0,
$$
and consequently,
$$
\e(\tau\wedge n)H_1+\sum_g\p(x_{\tau\wedge n}=g)\log{\p (x_{\tau\wedge n}=g)}\geq0.
$$
Since $\e(\tau\wedge n)\leq \e(\tau)$, we can write
\begin{equation}\label{fatu}
-\sum_g\p(x_{\tau\wedge n}=g)\log{\p (x_{\tau\wedge n}=g)}\leq \e(\tau)H_1.
\end{equation}
Applying Fatou's lemma to  inequality (\ref{fatu})  gives
\begin{equation}\label{fatu2}
-\sum_g\liminf_n\p(x_{\tau\wedge n}=g)\log{\p (x_{\tau\wedge n}=g)}\leq \e(\tau)H_1.
\end{equation}
On the other hand, the fact that $\lim\limits_n\p (x_{\tau\wedge n}=g)=\mu_{\tau}(g)$ in combination with the continuity of the function $x\log{x}$ implies that
\begin{equation}\label{fatu3}
\lim_n\p(x_{\tau\wedge n}=g)
\log{\p(x_{\tau\wedge n}=g)}=\mu_{\tau}(g)\log{\mu_{\tau}(g)}.
\end{equation}
Now, by (\ref{fatu2}) and (\ref{fatu3}) we obtain
$$
H(\mu_{\tau})=-\sum_g\lim_n\p(x_{\tau\wedge n}=g)\log{\p (x_{\tau\wedge n}=g)}\leq \e(\tau)H_1.
$$
Therefore, we have proved that
$$
\lim_{n\to\infty}H(\mu_{\tau\wedge n})=H(\mu_{\tau}).
$$
\end{proof}

\subsection{Proof of Theorem~\ref{main}}
Now, we can find the asymptotic entropy of the random walk $(G,\mu_{\tau})$. Since the expectation
of the Markov stopping time $\tau$ is finite,   Lemma~\ref{finite} implies that $H(\mu_{\tau})$ is also finite. Therefore, Theorem~\ref{harmonic} implies that
$$
h({\mu_{\tau}})=\lim_n\dfrac{1}{n}\log \dfrac{dx_{\tau_n}\nu}{d\nu}\bnd(\x)
$$
for almost every sample path $\x=(x_n)$.
Since $U$ is a measure preserving transformation,
$\e(\tau_n)=n\e(\tau)$, and, moreover,
$\lim\limits_n\dfrac{\tau_{n}(\x)}{n}=\e(\tau)$ for almost every sample path $\x$. Therefore, it is obvious that
$$
h(\mu_{\tau})=\lim_n\dfrac{\tau_{n}(\x)}{n}\dfrac{1}{\tau_{n}(\x)}\log \dfrac{dx_{\tau_{n}}\nu}{d\nu}\bnd(\x).
$$
By applying Theorem~\ref{harmonic} to the random walk $(G,\mu)$, we will have
$$
h(\mu_{\tau})=\e(\tau)h(\mu).
$$

\subsection{Entropy of random walks transformed  via a randomized Markov stopping time}
Let $\tau$ be a randomized Markov stopping time with finite expectation.
If we replace the sub-martingale in Lemma~\ref{finite} with
 $$
 M_n(\x,\boldsymbol{\omega})=nH_1+\log\mu^{*n}(x_n),
 $$
 then finiteness of the entropy of $\mu_{\tau}$ can be obtained by reproducing the proof of Lemma~\ref{finite}.
Since the Poisson boundary of $(G,\mu_{\tau})$ is the same as the Poisson boundary $(G,\mu)$,  the proof of Theorem~\ref{expectation} applies. Hence, we have

\begin{thm}\label{markov extension}
Let $\tau$ be a randomized Markov stopping time for the random walk $(G,\mu)$. If  $\e(\tau)$ is finite, then
$$
h(\mu_{\tau})=\e(\tau)h({\mu}).
$$
\end{thm}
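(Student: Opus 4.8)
The plan is to transcribe the three-step proof of Theorem~\ref{main} to the extended Markov chain on $G\times\Omega$, exploiting the fact that every ingredient used there --- Lemma~\ref{finite}, the boundary identification of Theorem~\ref{BK}, and Shannon's formula in the form of Theorem~\ref{harmonic} --- is already available in the randomized setting. The guiding observation is that the randomizing coordinate $\boldsymbol\omega$ is inert for all the quantities that matter: the $G$-marginal of $x_n$ under $\p_{\delta_e\otimes m}$ is still $\mu^{*n}$, so $H_n$ and $h(\mu)$ are unchanged, and by Theorem~\ref{BK} the Poisson boundary of the extended chain is still $(\Gamma,\nu)$.

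First I would establish finiteness of $H(\mu_\tau)$. I keep the same process $M_n(\x,\boldsymbol\omega)=nH_1+\log\mu^{*n}(x_n)$, now regarded as adapted to the enlarged filtration $\mathcal{A}_0^n$ generated by the positions $(x_i,\omega_i)$ for $i\le n$. Because $M_n$ depends only on the $G$-coordinate and the next increment $(h,\omega_{n+1})$ is distributed according to $\mu\otimes\theta$, the conditional expectation $\e(M_{n+1}\mid\mathcal{A}_0^n)$ is computed by averaging over $h$ alone, so the submartingale property rests on exactly the same elementary bound (\ref{equality}) as in Lemma~\ref{finite}. Applying Doob's optional stopping theorem at $\tau\wedge n$ and then Fatou's lemma verbatim yields $H(\mu_\tau)\le\e(\tau)H_1<\infty$; here the expectation of $M_{\tau\wedge n}$ produces the entropy of the $G$-marginal of $x_{\tau\wedge n}$, which converges to $H(\mu_\tau)$.

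With finiteness in hand and $\Gamma(G,\mu_\tau)=\Gamma(G,\mu)$ from Theorem~\ref{BK}, Theorem~\ref{harmonic} applies to $(G,\mu_\tau)$ and gives
$$
h(\mu_\tau)=\lim_n\dfrac{1}{n}\log\dfrac{dx_{\tau_n}\nu}{d\nu}\bnd(\x,\boldsymbol\omega)
$$
for $\p_{\delta_e\otimes m}$-almost every randomized path, where the iterated times $\tau_n$ are built from $\mathcal{U}$ as in (\ref{iteration}). Since the $\tau_n$ are strictly increasing and finite, $(x_{\tau_n})$ is a subsequence of the original path $(x_n)$, and the boundary point $\bnd(\x,\boldsymbol\omega)$ coincides with that of the projected path $\x$. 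I would then write
$$
\dfrac{1}{n}\log\dfrac{dx_{\tau_n}\nu}{d\nu}\bnd(\x)=\dfrac{\tau_n(\x,\boldsymbol\omega)}{n}\cdot\dfrac{1}{\tau_n(\x,\boldsymbol\omega)}\log\dfrac{dx_{\tau_n}\nu}{d\nu}\bnd(\x)
$$
and let $n\to\infty$. The second factor tends to $h(\mu)$ because Theorem~\ref{harmonic} for $(G,\mu)$ guarantees convergence of the full sequence, hence along the subsequence $\tau_n\to\infty$; the first factor tends to $\e(\tau)$ because $\mathcal{U}$ is measure preserving, so $\e(\tau_n)=n\e(\tau)$ and $\tau_n/n\to\e(\tau)$ almost surely, exactly as in the proof of Theorem~\ref{main}. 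Multiplying the two limits gives $h(\mu_\tau)=\e(\tau)h(\mu)$.

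The step I expect to require the most care is not analytic but bookkeeping: confirming that the $\boldsymbol\omega$-coordinate is genuinely inert at each stage. Concretely, one must check that $M_n$ remains a submartingale for the strictly larger filtration of the extended chain (immediate once one notes that $M_n$ ignores $\boldsymbol\omega$), that the $G$-marginal of $x_{\tau\wedge n}$ --- rather than the joint law on $G\times\Omega$ --- is what appears in the entropy bound, and that the boundary map of the randomized chain factors through the projection to $\x$, which is precisely the content of Theorem~\ref{BK}. Once these compatibility points are settled, the analytic core of Theorem~\ref{main} transfers without modification.
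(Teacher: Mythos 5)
Your proposal is correct and follows essentially the same route as the paper: the author likewise keeps the submartingale $M_n(\x,\boldsymbol{\omega})=nH_1+\log\mu^{*n}(x_n)$ on the extended chain to rerun Lemma~\ref{finite}, and then invokes Theorem~\ref{BK} so that the Radon--Nikodym argument of Theorem~\ref{main} via Theorem~\ref{harmonic} applies verbatim. Your write-up simply spells out the ``inertness'' of the $\boldsymbol{\omega}$-coordinate in more detail than the paper does.
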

The combination of Example~\ref{convex} and the preceding theorem implies the following result of Kaimanovich \cite{K83}.
\begin{ex}\label{kaiver}
Let  $\mu'=\sum\limits_{k\geq0}a_k\mu_k$, where $\sum\limits_{k\geq0}a_k=1$ and $a_k\geq0$. Then
$$
h({\mu'})=(\sum_{k\geq0}ka_k)h({\mu}).
$$
\end{ex}

\begin{ex}
Let $\mu=\alpha+\beta$ be as in Example~\ref{wil2}.
Let $\mu'=\beta+\sum\limits_{i\geq1}\alpha^{*i}*\beta$, then
$$
h({\mu'})=\dfrac{1}{\|\beta\|}h(\mu),
$$
where $\|\beta\|$ is the total mass of $\beta$.
\end{ex}

\subsection{$\mu$-boundary}
A {\it $\mu$-boundary} $(\Gamma_{\xi},\nu_{\xi})$ is the quotient of the Poisson boundary with respect to a  $G$-invariant measurable partition $\xi$ (see, \cite{Fu70, K00}).  The {\it differential entropy} of a $\mu$-boundary is defined as
\begin{equation}\label{mubnd}
E_{\mu}(\Gamma_{\xi},\nu_{\xi})=\sum_g\mu(g)\int\log{\dfrac{dg\nu_{\xi}}{d\nu_{\xi}}(g\gamma_{\xi})}d\nu_{\xi}(\gamma_{\xi}).
\end{equation}
Kaimanovich \cite{K83} showed that the asymptotic entropy $h({\mu})$ is the upper bound for
 the asymptotic entropies of $\mu$-boundaries, i.e.,
$$
E_{\mu}(\Gamma_{\xi},\nu_{\xi})\leq h({\mu}).
$$
Moreover, he \cite{K00}  proved an analogue of  Theorem~\ref{harmonic} for $\mu$-boundaries.
Therefore the claim of  Theorem~\ref{markov extension} is also valid the differential entropy of $\mu$-boundaries:

\begin{thm}
Let $\xi$ be a measurable $G$-invariant partition of the Poisson boundary $(G,\mu)$.
If $\tau$ is a randomized Markov stopping time with finite expectation, then
$$
E_{\mu_{\tau}}(\Gamma_{\xi},\nu_{\xi})=\e(\tau)E_{\mu}(\Gamma_{\xi},\nu_{\xi}).
$$

\end{thm}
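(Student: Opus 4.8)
The plan is to reproduce almost verbatim the argument used for Theorem~\ref{markov extension}, replacing the harmonic description of the asymptotic entropy (Theorem~\ref{harmonic}) by its analogue for $\mu$-boundaries established by Kaimanovich in \cite{K00}. That analogue asserts that for $\p$-almost every sample path $\x=(x_n)$,
$$
E_{\mu}(\Gamma_{\xi},\nu_{\xi})=\lim_n\dfrac{1}{n}\log\dfrac{dx_n\nu_{\xi}}{d\nu_{\xi}}\bnd_{\xi}(\x),
$$
where $\bnd_{\xi}$ denotes the quotient map from the path space onto the $\mu$-boundary $(\Gamma_{\xi},\nu_{\xi})$. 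The corresponding formula for the transformed walk $(G,\mu_{\tau})$ then expresses $E_{\mu_{\tau}}(\Gamma_{\xi},\nu_{\xi})$ as the same exponential growth rate evaluated along the sparse subsequence $(x_{\tau_n})$, and the whole proof reduces to comparing these two growth rates via $\tau_n/n\to\e(\tau)$.

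First I would record the two structural facts that place both differential entropies on the \emph{same} object. By Theorem~\ref{BK} the Poisson boundaries of $(G,\mu)$ and $(G,\mu_{\tau})$ coincide as measure $G$-spaces, so the $G$-invariant partition $\xi$ yields the same quotient $(\Gamma_{\xi},\nu_{\xi})$ for both walks; in particular the harmonic measure $\nu_{\xi}$, being the projection of the common $\nu$, is simultaneously $\mu$-stationary and $\mu_{\tau}$-stationary, so both sides of (\ref{mubnd}) are well defined with respect to a single pair $(\Gamma_{\xi},\nu_{\xi})$. Moreover $E_{\mu}(\Gamma_{\xi},\nu_{\xi})\leq h(\mu)<\infty$, and by Theorem~\ref{markov extension} together with $E_{\mu_{\tau}}(\Gamma_{\xi},\nu_{\xi})\leq h(\mu_{\tau})=\e(\tau)h(\mu)$ the transformed differential entropy is finite as well, which is the regime in which Kaimanovich's formula applies.

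With these inputs the computation runs exactly as in the proof of Theorem~\ref{main}. Each $x_{\tau_n}$ is a term of the original sample path and $\tau_n(\x)\to\infty$, so the normalized logarithm of the Radon--Nikodym derivative along $(\tau_n(\x))$ is a subsequence of the same quantity along the full sequence and therefore converges to the same limit $E_{\mu}(\Gamma_{\xi},\nu_{\xi})$. Using $\lim_n \tau_n(\x)/n=\e(\tau)$ (which holds, as before, because $U$ is measure preserving, whence $\e(\tau_n)=n\e(\tau)$), I would then write
$$
E_{\mu_{\tau}}(\Gamma_{\xi},\nu_{\xi})=\lim_n\dfrac{\tau_n(\x)}{n}\cdot\dfrac{1}{\tau_n(\x)}\log\dfrac{dx_{\tau_n}\nu_{\xi}}{d\nu_{\xi}}\bnd_{\xi}(\x)=\e(\tau)\,E_{\mu}(\Gamma_{\xi},\nu_{\xi}).
$$

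The main obstacle I anticipate is not the algebra but justifying the two structural inputs cleanly. One must verify that the $\mu_{\tau}$-boundary realized on $(\Gamma_{\xi},\nu_{\xi})$ genuinely carries the \emph{same} stationary measure as the $\mu$-boundary, so that the two differential entropies are literally computed against one pair $(\Gamma_{\xi},\nu_{\xi})$; and one must check that Kaimanovich's $\mu$-boundary analogue of Theorem~\ref{harmonic} holds in the stated pointwise form and that its almost-sure convergence transfers to the subsequence $(\tau_n(\x))$. The finiteness of $\e(\tau)$ is essential precisely here, since it guarantees $\tau_n(\x)/n\to\e(\tau)$ almost surely and hence that the subsequence is asymptotically linear in $n$; without it the growth rate along $(x_{\tau_n})$ need not be a clean rescaling of the original one.
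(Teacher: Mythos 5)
Your proposal follows exactly the route the paper intends: it invokes Kaimanovich's $\mu$-boundary analogue of Theorem~\ref{harmonic} and then repeats the argument of Theorem~\ref{main} verbatim, using Theorem~\ref{BK} to identify the boundaries and $\tau_n(\x)/n\to\e(\tau)$ to rescale the growth rate. The paper's own proof is just a one-line remark to this effect, so your write-up is the same argument spelled out in more detail.
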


\section{Rate of escape}
In this section,  we establish a relationship between the escape rates of the  random walk transformed via a randomized Markov stopping time and of the original one.

\medskip

\begin{df}
A gauge $\g=(\g_n)$ on a group $G$ is an increasing sequence of subsets of  $G$ such that $G=\bigcup\limits_n\g_n$. A gauge function $|.|_{\g}$ on $G$ defined as
$$
|g|=|g|_{\g}=\min\{n\ : \ g\in\g_n\}.
$$

A gauge $\g$ is called {\it sub-additive}, whenever its gauge function is sub-additive.
\end{df}
A measure $\mu$ has a finite {\it first moment} with respect to a gauge $\g$, if
$$
|\mu|=\sum_g|g|\mu(g)
$$
is finite.
If  $\g$ is a sub-additive gauge, then obviously
$$
|x_{n+m}|\leq|x_n|+|(U^n\x)_m|,
$$
so that
Kingman's sub-additive theorem implies
\begin{thm}\cite{De80, G79}
Let $\g$ be a  sub-additive gauge and $\mu$ have a finite first moment with respect to $\g$. Then
\begin{equation}
\ell(G,\mu,\g)=\ell(\mu)=\lim_n\dfrac{|x_n|}{n}
\end{equation}
exists for $\p$-almost every sample path $\x=(x_n)$, and also in $L^1(\p)$.
\end{thm}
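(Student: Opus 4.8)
The plan is to apply Kingman's subadditive ergodic theorem to the sequence of functions $f_n(\x)=|x_n|_{\g}$ on the path space $(G^{\Bbb Z+},\p)$, using the transformation $U$ defined in (\ref{induced}) as the underlying measure preserving map. First I would record that $U$ preserves $\p$: indeed, $U$ is conjugate, via the coding $(h_n)\mapsto\x$, to the one-sided shift on the increment space $(G^{\Bbb N},\bigotimes_1^\infty\mu)$, and since the increments $h_n$ are i.i.d. this shift preserves the product measure. The same conjugacy shows that $U$ is ergodic, because the Bernoulli shift on i.i.d. coordinates is ergodic.

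Next I would check the two hypotheses of Kingman's theorem. For integrability, note that the gauge function takes values in $\Bbb N$, so $f_n\geq0$; moreover, writing $x_n=h_1\cdots h_n$ and using subadditivity of $|\cdot|_{\g}$ repeatedly gives
\begin{equation*}
\e(f_n)=\e\,|x_n|\leq\sum_{i=1}^{n}\e\,|h_i|=n|\mu|<\infty,
\end{equation*}
so that in particular $f_1=|x_1|$ is integrable. For the subadditive cocycle relation, observe that $(U^n\x)_m=x_n^{-1}x_{n+m}$, whence the inequality already displayed before the statement,
\begin{equation*}
f_{n+m}(\x)=|x_{n+m}|\leq|x_n|+|x_n^{-1}x_{n+m}|=f_n(\x)+f_m(U^n\x),
\end{equation*}
is exactly the hypothesis $f_{n+m}\leq f_n+f_m\circ U^n$ required by Kingman's theorem.

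With these verified, Kingman's theorem yields that $f_n/n=|x_n|/n$ converges $\p$-almost everywhere to a $U$-invariant limit whose integral equals $\inf_n\e(f_n)/n$. Because $f_n\geq0$ this infimum lies in $[0,\infty)$, hence is finite and bounded below; therefore the limit is finite and the convergence also holds in $L^1(\p)$. Finally, ergodicity of $U$ forces the invariant limit to be $\p$-almost surely constant, and this constant is by definition $\ell(G,\mu,\g)=\ell(\mu)$.

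The argument is essentially a matter of matching the position process to the format of Kingman's theorem, so no single step presents a serious obstacle. The only point requiring a little care is the identification of the correct measure preserving transformation: one must use $U$ rather than the naive time shift $T$ on paths (which maps a path issued from $e$ to one issued from $x_1$ and so does not preserve $\p$), and verify that the cocycle $(U^n\x)_m=x_n^{-1}x_{n+m}$ is the one for which $|x_n|$ becomes subadditive. Positivity of the gauge is what makes the finiteness condition $\inf_n\e(f_n)/n>-\infty$ automatic, and thereby upgrades almost sure convergence to $L^1$ convergence with no additional hypotheses.
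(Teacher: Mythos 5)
Your proof is correct and takes essentially the same route as the paper, which simply records the subadditivity $|x_{n+m}|\leq|x_n|+|(U^n\x)_m|$ and invokes Kingman's subadditive ergodic theorem (citing \cite{De80,G79}). You have merely filled in the standard verifications (that $U$ is the measure-preserving ergodic shift on increments, integrability from the finite first moment, and nonnegativity giving the $L^1$ conclusion), all of which are accurate.
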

The quantity $\ell(\mu)$ is called the rate of escape (drift) of the random walk $(G,\mu)$ with respect to the gauge $\g$.
\medskip
\begin{ex}
Let $G$ be a group generated by  a finite set $S=S^{-1}$. Then, $(S^n)_n\geq1$ is a sub-additive gauge.
\end{ex}
\begin{ex}
Let $(G,\mu)$  be a {\it transient} random walk. Denote by $F(g)<1$ the probability that
the random walk $(G,\mu)$ ever visits a point $g\in G$.
Let $\g_n=\{g\in G : -\ln{F(g)}\leq n\}$, then $\g=(\g_n)_{n\geq0}$ is a sub-additive gauge.
Actually,  $-\ln{F(g)}$ is the ``distance'' of $g$ from  the identity element in the so-called {\it Green metric}
\cite{BS07}.
The rate of escape with respect to the Green metric is equal to the asymptotic entropy \cite{BHM08}.
\end{ex}
\begin{thm}
Let $\mathcal{G}$ be a sub-additive gauge for  group $G$ and $\mu$ have a finite first moment.
Let $\tau$ be a randomized Markov stopping time with finite expectation $\e(\tau)$.
Then, the probability measure $\mu_{\tau}$ has a finite first moment with respect  to the gauge $\g$,
and
\begin{equation}\label{rate}
\ell(\mu_{\tau})=\e(\tau)\ell(\mu).
\end{equation}
\end{thm}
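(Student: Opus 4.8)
The plan is to prove the statement in close parallel to the entropy case, replacing the entropy functional by the gauge and the multiplicative cocycle $\frac{dx_n\nu}{d\nu}$ by the additive cocycle $|x_n|$. First I would establish that $\mu_\tau$ has a finite first moment. This is the analogue of Lemma~\ref{finite}, and I would prove it by the same sub-martingale/optional stopping argument. By sub-additivity of the gauge function, the sequence $M_n(\x)=|\mu|\,n-|x_n|$ is a sub-martingale with respect to the filtration $\mathcal A_0^n$: indeed, the inequality $|x_{n+1}|\le |x_n|+|h_{n+1}|$ gives $\e(|x_{n+1}|\mid\mathcal A_0^n)\le |x_n|+|\mu|$, so $\e(M_{n+1}\mid\mathcal A_0^n)\ge M_n$. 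Applying Doob's optional stopping theorem to $\tau\wedge n$ yields $\e(|x_{\tau\wedge n}|)\le |\mu|\,\e(\tau\wedge n)\le |\mu|\,\e(\tau)$, and then Fatou's lemma (using $|x_{\tau\wedge n}|\to|x_\tau|$ as $n\to\infty$) gives
\begin{equation}
|\mu_\tau|=\sum_g|g|\,\mu_\tau(g)=\e\bigl(|x_\tau|\bigr)\le |\mu|\,\e(\tau)<\infty.
\end{equation}
Thus $\mu_\tau$ has a finite first moment with respect to $\g$, which is exactly what is required before invoking Kingman's theorem for the transformed walk.

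Next, with finiteness of $|\mu_\tau|$ in hand, the rate of escape $\ell(\mu_\tau)$ exists and, by the same reasoning used in the proof of Theorem~\ref{main}, it is computed along the subsequence $(x_{\tau_n})$ of the original sample path. I would write
\begin{equation}
\ell(\mu_\tau)=\lim_n\frac{|x_{\tau_n}(\x)|}{n}
=\lim_n\frac{\tau_n(\x)}{n}\cdot\frac{|x_{\tau_n}(\x)|}{\tau_n(\x)}.
\end{equation}
The two key limits are: first, that $\tau_n/n\to\e(\tau)$ almost everywhere, which follows because the iterated stopping times $\tau_n$ are produced by the measure-preserving transformation $\mathcal U$ (so $\tau_{n}$ is a Birkhoff sum of the single stopping time over the ergodic transformation, giving $\e(\tau_n)=n\,\e(\tau)$ and the pointwise convergence by the ergodic theorem); and second, that $|x_{\tau_n}|/\tau_n\to\ell(\mu)$ almost everywhere, since $\tau_n(\x)\to\infty$ and $|x_m|/m\to\ell(\mu)$ along the full sequence $m\to\infty$ by the original Kingman limit. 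Multiplying the two limits yields $\ell(\mu_\tau)=\e(\tau)\,\ell(\mu)$.

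The main obstacle is justifying the second limit rigorously: one knows $|x_m|/m\to\ell(\mu)$ only along the deterministic sequence of all integers $m$, whereas $\tau_n(\x)$ is a random, path-dependent subsequence. The clean resolution is that convergence of a sequence to a limit forces convergence of every subsequence to the same limit, and $\tau_n(\x)\to\infty$ for almost every $\x$ (since each $\tau_k\ge 1$ and they are strictly increasing); hence $|x_{\tau_n}|/\tau_n\to\ell(\mu)$ along this random subsequence as well. I would take some care to note that this subsequence argument is valid precisely on the full-measure set where both the original Kingman limit holds and $\tau_n\to\infty$, and that the factorization above is legitimate there because both factors converge to finite limits. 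The first-moment bound from the opening step is what guarantees we are on solid footing to apply the ergodic theorem to $\tau_n/n$ in the first place.
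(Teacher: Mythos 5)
Your proposal is correct and follows essentially the same route as the paper: the paper also introduces the sub-martingale $L_n=n\e(|x_1|)-|x_n|$ (your $M_n$), verifies the sub-martingale property via sub-additivity of the gauge, and then invokes ``the same argument as in Lemma~\ref{finite} and Theorem~\ref{main}'' --- precisely the optional stopping/Fatou step for finiteness of $|\mu_\tau|$ and the factorization $|x_{\tau_n}|/n=(\tau_n/n)\cdot(|x_{\tau_n}|/\tau_n)$ that you spell out. Your write-up simply supplies the details the paper leaves implicit.
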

\begin{proof}
Let $L_n(\x,\boldsymbol{y})=n\e(|x_1|)-|x_n|$. Then
$$
\e(L_{n+1}(\x,\boldsymbol{y})|\mathcal{A}_0^n)=(n+1)\e(\tau)-\sum_h|x_nh|\mu(h).
$$
The sub-additivity of the gauge $\mathcal{G}$ implies that $\{L_n,\mathcal{A}_0^n\}$ is a sub-martingale.
Now applying the same argument as in Lemma~\ref{finite} and Theorem~\ref{main}, we get the equality (\ref{rate}).
\end{proof}
\bibliographystyle{alpha}
\begin{bibdiv}
\begin{biblist}
\bib{Ab59}{article}{
   author={Abramov, L. M.},
   title={The entropy of a derived automorphism},
   language={Russian},
   journal={Dokl. Akad. Nauk SSSR},
   volume={128},
   date={1959},
   pages={647--650},
   issn={0002-3264},
}

\bib{A72}{article}{
   author={Avez, Andr{\'e}},
   title={Entropie des groupes de type fini},
   language={French},
   journal={C. R. Acad. Sci. Paris S\'er. A-B},
   volume={275},
   date={1972},
   pages={A1363--A1366},
}

\bib{A74}{article}{
   author={Avez, Andr{\'e}},
   title={Th\'eor\`eme de Choquet-Deny pour les groupes \`a croissance non
   exponentielle},
   language={French},
   journal={C. R. Acad. Sci. Paris S\'er. A},
   volume={279},
   date={1974},
   pages={25--28},
}

\bib{BS07}{article}{
   author={Blach{\`e}re, S{\'e}bastien},
   author={Brofferio, Sara},
   title={Internal diffusion limited aggregation on discrete groups having
   exponential growth},
   journal={Probab. Theory Related Fields},
   volume={137},
   date={2007},
   number={3-4},
   pages={323--343},
   issn={0178-8051},
}

\bib{BHM08}{article}{
   author={Blach{\`e}re, S{\'e}bastien},
   author={Ha{\"{\i}}ssinsky, Peter},
   author={Mathieu, Pierre},
   title={Asymptotic entropy and Green speed for random walks on countable
   groups},
   journal={Ann. Probab.},
   volume={36},
   date={2008},
   number={3},
   pages={1134--1152},
   issn={0091-1798},
}

\bib{De80}{article}{
   author={Derriennic, Yves},
   title={Quelques applications du th\'eor\`eme ergodique sous-additif},
   language={French, with English summary},
   conference={
      title={Conference on Random Walks},
      address={Kleebach},
      date={1979},
   },
   book={
      series={Ast\'erisque},
      volume={74},
      publisher={Soc. Math. France, Paris},
   },
   date={1980},
   pages={183--201, 4},
}

\bib{Doob53}{book}{
   author={Doob, J. L.},
   title={Stochastic processes},
   publisher={John Wiley \& Sons, Inc., New York; Chapman \& Hall, Limited,
   London},
   date={1953},
   pages={viii+654},
}

\bib{BK2013}{article}{
    author = {Forghani, B.}
    author={Kaimanovich, V. A.},
     title = {Boundary preserving transformations of random walks},
     date={2014},
   journal = {In preparation},
}

\bib{Fu70}{article}{
   author={Furstenberg, Harry},
   title={Random walks and discrete subgroups of Lie groups},
   conference={
      title={Advances in Probability and Related Topics, Vol. 1},
   },
   book={
      publisher={Dekker, New York},
   },
   date={1971},
   pages={1--63},
}

\bib{G79}{article}{
   author={Guivarc'h, Y.},
   title={Sur la loi des grands nombres et le rayon spectral d'une marche
   al\'eatoire},
   language={French, with English summary},
   conference={
      title={Conference on Random Walks},
      address={Kleebach},
      date={1979},
   },
   book={
      series={Ast\'erisque},
      volume={74},
      publisher={Soc. Math. France, Paris},
   },
   date={1980},
   pages={47--98, 3},
}

\bib{Y13}{article}{
   author={Hartman, Yair},
   author={Lima, Yuri},
   author={Tamuz, Omer},
   title={An Abramov formula for stationary spaces of discrete groups},
   journal={Ergodic Theory Dynam. Systems},
   volume={34},
   date={2014},
   number={3},
   pages={837--853},
   issn={0143-3857},
}

\bib{Kac47}{article}{
   author={Kac, M.},
   title={On the notion of recurrence in discrete stochastic processes},
   journal={Bull. Amer. Math. Soc.},
   volume={53},
   date={1947},
   pages={1002--1010},
   issn={0002-9904},
}
\bib{K83}{article}{
   author={Kaimanovich, V. A.},
   title={Differential entropy of the boundary of a random walk on a group},
   language={Russian},
   journal={Uspekhi Mat. Nauk},
   volume={38},
   date={1983},
   number={5(233)},
   pages={187--188},
   issn={0042-1316},
}

\bib{KV83}{article}{
   author={Kaimanovich, V. A.},
   author={Vershik, A. M.},
   title={Random walks on discrete groups: boundary and entropy},
   journal={Ann. Probab.},
   volume={11},
   date={1983},
   number={3},
   pages={457--490},
   issn={0091-1798},
}

\bib {K91}{article}{
   author={Kaimanovich, Vadim A.},
   title={Poisson boundaries of random walks on discrete solvable groups},
   conference={
      title={Probability measures on groups, X},
      address={Oberwolfach},
      date={1990},
   },
   book={
      publisher={Plenum, New York},
   },
   date={1991},
   pages={205--238},
}

\bib{K90}{article}{
   author={Kaimanovich, Vadim A.},
   title={Discretization of bounded harmonic functions on Riemannian
   manifolds and entropy},
   conference={
      title={Potential theory},
      address={Nagoya},
      date={1990},
   },
   book={
      publisher={de Gruyter, Berlin},
   },
   date={1992},
   pages={213--223},
}

\bib{K00}{article}{
   author={Kaimanovich, Vadim A.},
   title={The Poisson formula for groups with hyperbolic properties},
   journal={Ann. of Math. (2)},
   volume={152},
   date={2000},
   number={3},
   pages={659--692},
   issn={0003-486X},
}

\bib{KL07}{article}{
   author={Karlsson, Anders},
   author={Ledrappier, Fran{\c{c}}ois},
   title={Linear drift and Poisson boundary for random walks},
   journal={Pure Appl. Math. Q.},
   volume={3},
   date={2007},
   number={4, Special Issue: In honor of Grigory Margulis.},
   pages={1027--1036},
   issn={1558-8599},
}

\bib {Kol58}{article}{
   author={Kolmogorov, A. N.},
   title={A new metric invariant of transient dynamical systems and
   automorphisms in Lebesgue spaces},
   language={Russian},
   journal={Dokl. Akad. Nauk SSSR (N.S.)},
   volume={119},
   date={1958},
   pages={861--864},
   issn={0002-3264},
}

\bib{Ro52}{article}{
   author={Rohlin, V. A.},
   title={On the fundamental ideas of measure theory},
   journal={Amer. Math. Soc. Translation},
   volume={1952},
   date={1952},
   number={71},
   pages={55},
   issn={0065-9290},
}

\bib {Sh48}{article}{
   author={Shannon, C. E.},
   title={A mathematical theory of communication},
   journal={Bell System Tech. J.},
   volume={27},
   date={1948},
   pages={379--423, 623--656},
   issn={0005-8580},
}
\bib {Sin59}{article}{
   author={Sinai, Ja.},
   title={On the concept of entropy for a dynamic system},
   language={Russian},
   journal={Dokl. Akad. Nauk SSSR},
   volume={124},
   date={1959},
   pages={768--771},
   issn={0002-3264},
}
\bib {V85}{article}{
   author={Varopoulos, Nicholas Th.},
   title={Long range estimates for Markov chains},
   language={English, with French summary},
   journal={Bull. Sci. Math. (2)},
   volume={109},
   date={1985},
   number={3},
   pages={225--252},
   issn={0007-4497},
}

\bib{W90}{article}{
   author={Willis, G. A.},
   title={Probability measures on groups and some related ideals in group
   algebras},
   journal={J. Funct. Anal.},
   volume={92},
   date={1990},
   number={1},
   pages={202--263},
   issn={0022-1236},
}
\end{biblist}
\end{bibdiv}

\end{document}